\newcommand{\N}{\mathbb{N}}
\newcommand{\reali}{\mathbb{R}}
\newcommand{\rpic}{{\mathbb{R}^+}}
\newcommand{\pt}{\partial}
\renewcommand{\div}{{\rm div}\,}
\newcommand{\Mes}{\mathcal{M}}
\newcommand{\M}{\mathbf{M}}
\renewcommand{\det}{{\rm det}}
\newcommand{\lip}{{\rm Lip}}
\newcommand{\dst}{\displaystyle}
\newcommand{\C}[1]{\mathscr{C}^{#1}}
\newcommand{\Cc}[1]{\mathscr{C}_c^{#1}}
\newcommand{\modulo}[1]{{\left|#1\right|}}
\newcommand{\norma}[1]{{\left\|#1\right\|}}
\renewcommand{\L}[1]{{\mathbf{L}^#1}}
\newcommand{\W}[2]{{\mathbf{W}^{#1,#2}}}
\newcommand{\Lloc}[1]{{\mathbf{L}_{loc}^{#1}}}
\renewcommand{\d}[1]{\mathinner{\mathrm{d}{#1}}}
\newcommand{\conv}{{\boldsymbol {*}}}
\renewcommand{\leq}{\leqslant}
\renewcommand{\geq}{\geqslant}
\newtheorem{theorem}[subsection]{Theorem}
\newtheorem{lemma}[subsection]{Lemma}
\newtheorem{proposition}[subsection]{Proposition}
\newtheorem{corollary}[subsection]{Corollary}
\theoremstyle{definition}
\newtheorem{definition}[subsection]{Definition}
\theoremstyle{remark}
\newtheorem{remark}[subsection]{Remark}
\numberwithin{equation}{section}
\title[System of continuity equations with non-local flow]{Existence and uniqueness of measure solutions for a system of continuity equations with non-local flow}
\author{Gianluca Crippa}
\address{Departement Mathematik und Informatik, 
Universitaet Basel, 
Rheinsprung 21, CH-4051 Basel, Switzerland,
\texttt{gianluca.crippa@unibas.ch}}
\author{Magali L\'ecureux-Mercier}
\address{
Technion -- Israel Institute of Technology, Amado Building, 
Haifa 32000,
Israel,
\texttt{mercier@tx.technion.ac.il}}
\date{\today}
\begin{document}
\bibliographystyle{plain}
\begin{abstract}
In this paper, we prove existence and uniqueness of measure solutions for the Cauchy problem associated to the (vectorial) continuity equation with a non-local flow. We also give a stability result with respect to various parameters. 

 \medskip

  \noindent\textsc{Keywords:} System of conservation laws, continuity equations, Wasserstein distance, nonlocal
  flow, pedestrian traffic.

  \medskip

  \noindent\textsc{2010 MSC:} 35L65

\end{abstract}

\maketitle

%
%


\section{Introduction}

In this paper, we consider the system of nonlocal continuity equations
\begin{equation}\label{eq:system}
\left\{\begin{array}{ll}
 \pt_t \rho^i +\div \left(\rho^i V^i(t, x, \eta^i \conv \rho)\right)=0\,,& \qquad t\in \rpic\,,\; x\in \reali^d\,,\\[3pt]
\rho^i(0)=\bar \rho^i\,,&\qquad i \in \{1,\ldots, k\}\,,
\end{array}
\right.
\end{equation}
where the unknown $\rho=(\rho^1, \ldots, \rho^k)$ is a vector of measures,  $\eta^i=(\eta^{i,1}, \ldots , \eta^{i ,k})$ is a vector of convolution kernels and we set  $\eta^i \conv \rho= (\eta^{i,1}*\rho^1, \ldots,\eta^{i,k}*\rho^k)$. For any time $t\geq 0$, if $\mu_t\in \Mes^+(\reali^d)$ is a bounded measure on $\reali^d$ and $\eta_t$ is a bounded function on $\reali^d$, then the convolution is taken with respect to space only and is defined as usually as $\mu_t*\eta_t =\int_{\reali^d}\eta_t(x-y)\d{\mu_t(y)}$.

For example in \cite{ColomboHertyMercier}, the authors consider the scalar conservation law
where $V$ is a nonlocal functional with respect to $\rho$. This equation stands for various models such as a sedimentation model, a supply-chain model, a pedestrian traffic model. For physical reason, in the following we are looking for positive solutions possibly with concentration, i.e.  for any time $t$  the solution has to be in $\Mes^+(\reali^d)^k$.
 
Our goal here is to improve the results of \cite{ColomboHertyMercier}, not only by considering a system, but also 
lightening the hypotheses on $V$ and $\eta$.
We prove here existence and uniqueness of weak measure solutions to (\ref{eq:system}).
Let us introduce the following sets of hypotheses:
\begin{description}
\item[(V)] The vector field $V (t,x,r) : \rpic\times \reali^d\times \reali^k\to \M_{d,k}$ is uniformly bounded and it is  Lipschitz in $(x,r) \in \reali^d\times \reali^k$ uniformly in time: 
$$
V\in \L\infty(\rpic\times\reali^d\times \reali^k)\cap\L\infty(\rpic, \lip(\reali^d\times\reali^k,
\M_{d,k}))\,.
$$
\item[($\boldsymbol{\eta}$)] The convolution kernel $\eta (t,x) : \rpic \times \reali^d\to \M_k$  is uniformly bounded and it is Lipschitz in $x\in \reali^d$ uniformly in time: 
$$
\eta\in \L\infty(\rpic\times\reali^d)\cap \L\infty(\rpic, \lip(\reali^d, \M_{k}))\,.
$$
\end{description}
The main result of this paper is the following theorem.
\begin{theorem}\label{thm:main}
Let $\bar\rho \in \Mes^+(\reali^d)^k$. Let us assume that $V$ satisfies \textbf{(V)} and $\eta $ satisfies \textbf{($\boldsymbol{\eta}$)}. Then there exists a unique solution $\rho\in \L\infty(\reali^+, \Mes^+(\reali^d)^k)$ to (\ref{eq:system}) with initial condition $\bar \rho$.
\end{theorem}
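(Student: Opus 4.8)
Let me think about how to prove existence and uniqueness of measure solutions for a system of continuity equations with non-local flow.

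The standard approach for continuity equations with measure data and a velocity field that depends non-locally on the solution is:

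1. **Fixed point / Banach contraction approach in Wasserstein distance.** Given a candidate solution $r = \eta \conv \rho$ (a known function of time and space), the velocity field $V^i(t, x, r^i(t,x))$ becomes a fixed, Lipschitz-in-$x$ vector field. Then we can solve each linear continuity equation by the method of characteristics: the solution is the push-forward of the initial measure under the flow map. So we define a map $\mathcal{T}$ that takes a trajectory $t \mapsto \rho_t$ in $\Mes^+(\reali^d)^k$, computes the non-local term $\eta^i \conv \rho$, builds the ODE flow, and pushes forward $\bar\rho$. A fixed point of $\mathcal{T}$ is a solution.

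2. **Key estimate: stability of the flow / push-forward in Wasserstein distance.** The heart is a Gronwall-type estimate. If $\rho$ and $\tilde\rho$ are two trajectories, then $\|\eta^i \conv \rho - \eta^i \conv \tilde\rho\|$ in sup norm is controlled by the Wasserstein-1 distance $W_1(\rho_t, \tilde\rho_t)$ (using that $\eta$ is Lipschitz in $x$), hence the two velocity fields are close in sup norm, hence the two flow maps are close (Gronwall on the ODEs), hence the push-forwards are close in $W_1$. Iterating, $\mathcal{T}$ is a contraction on a short time interval $[0,T]$, giving local existence and uniqueness; then one iterates in time. One needs total mass conservation (the continuity equation preserves $\int \d{\rho^i}$) so the estimates stay uniform.

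**Plan in order:**
- First I would make precise the notion of weak measure solution (test functions in $\Cc{\infty}(\rpic \times \reali^d)$) and recall the equivalence, for a fixed Lipschitz vector field, between weak solutions and push-forwards of $\bar\rho$ along the Cauchy flow (a superposition-type / method-of-characteristics lemma, presumably established earlier in the paper).
- Second, fix $R$ with $\sum_i \bar\rho^i(\reali^d) = R$ and work in the complete metric space $X_T$ of narrowly continuous curves $[0,T] \to \prod_i \{\mu \in \Mes^+(\reali^d) : \mu(\reali^d) = \bar\rho^i(\reali^d)\}$ with distance $\sup_{t} \sum_i W_1(\rho^i_t, \tilde\rho^i_t)$ (after verifying finiteness of $W_1$, e.g. by an a priori bound on supports or first moments coming from boundedness of $V$).
- Third, define $\mathcal{T}$ and prove the contraction estimate $\sup_t \sum_i W_1((\mathcal T\rho)^i_t, (\mathcal T\tilde\rho)^i_t) \leq C(T) \cdot \sup_t \sum_i W_1(\rho^i_t, \tilde\rho^i_t)$ with $C(T) \to 0$ as $T \to 0$, using: (a) $\|\eta^i \conv \rho - \eta^i \conv \tilde\rho\|_{\L\infty} \leq \lip(\eta)\, W_1$, (b) $\|V^i(\cdot,\cdot,\eta^i\conv\rho) - V^i(\cdot,\cdot,\eta^i\conv\tilde\rho)\|_{\L\infty} \leq \lip(V)\,\|\eta^i\conv\rho - \eta^i\conv\tilde\rho\|_{\L\infty}$, (c) Gronwall for the ODE flows, (d) $W_1(X_\# \mu, Y_\#\mu) \leq \|X - Y\|_{\L\infty}\,\mu(\reali^d)$.
- Fourth, conclude local existence/uniqueness by Banach fixed point, then extend globally: since the time of existence depends only on the total masses (which are conserved) and the Lipschitz/bound constants of $V$ and $\eta$ (which are global), repeat on $[T, 2T], \ldots$ to cover $\rpic$. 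Uniqueness on each interval glues to global uniqueness.

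**Main obstacle.** The chief technical difficulty is making the Wasserstein machinery work globally in space with only $\L\infty$ bounds on $V$: $W_1$ can be infinite between arbitrary measures of equal mass, so one must either (i) establish an a priori propagation bound — e.g. control of the first moment $\int |x|\,\d{\rho^i_t}$ or of $\spt \rho^i_t$ via $\|V\|_{\L\infty}$ — to confine the fixed-point iteration to a set where $W_1$ is finite and the space is complete, or (ii) localize and use a truncation argument. A secondary subtlety is that $V^i$ being merely Lipschitz in $x$ (not $C^1$) is exactly enough for the Cauchy–Lipschitz flow to be well-defined and for uniqueness of the linear continuity equation, so one should invoke the appropriate regular-Lagrangian-flow / characteristics result rather than classical ODE theory, and check the push-forward is narrowly continuous in $t$.
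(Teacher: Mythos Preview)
Your proposal is correct and follows essentially the same route as the paper: the authors also freeze the nonlocal term, define the map $\mathscr{Q}:r\mapsto\rho$ sending $r$ to the Lagrangian solution (push-forward of $\bar\rho$ along the ODE flow of $V(\cdot,\cdot,r\conv\eta)$), prove the contraction estimate $\sup_{[0,T]}\mathscr{W}_1(\mathscr{Q}(r)_t,\mathscr{Q}(s)_t)\leq CTe^{CT}\sup_{[0,T]}\mathscr{W}_1(r_t,s_t)$ via exactly your chain (a)--(d) (this is their Lemma~\ref{lem:stability} and Proposition~\ref{prop:stabGeneral}), apply Banach's fixed point for $T$ small, and iterate using mass conservation. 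Your identification of the finiteness-of-$W_1$ issue is apt---the paper glosses over it by simply asserting completeness of $(\mathscr{X},d)$, whereas your suggested fix (the bound $|X_t(x)-x|\leq\|V\|_{\L\infty}t$ forces $\mathscr{Q}$ to land in the $W_1$-ball of radius $\|V\|_{\L\infty}T$ around the constant curve $\bar\rho$, on which $d$ is a genuine finite metric) is the right way to make this rigorous; note however that ordinary Cauchy--Lipschitz theory already covers Lipschitz vector fields, so no appeal to DiPerna--Lions regular Lagrangian flows is needed.
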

We refer to Section \ref{sec:solution} for precise notations and definitions, in particular for the notion of solution. 

\begin{remark}\label{rk:propsol} Assume $V$ satisfies \textbf{(V)} and $\eta $ satisfies \textbf{($\boldsymbol{\eta}$)}. Then Theorem \ref{thm:main} is completed by the following properties: 
\begin{itemize}
\item If $\bar\rho\in \L1(\reali^d, (\reali^+)^k)$ then $\rho\in \C0(\rpic,\L1(\reali^d, (\reali^+)^k))$, up to redefinition on a negligible set of times, and for all time $t\geq 0$, for all $i\in \{1,\ldots, k\}$ we have $\norma{\rho^i(t)}_{\L1}=\norma{\bar\rho^i}_{\L1}$.
\item If $\bar\rho\in (\L1\cap\L\infty)(\reali^d, (\reali^+)^k)$ then $\rho\in \Lloc\infty(\rpic, \L\infty(\reali^d, (\reali^+)^k ))$ and for all time $t\geq 0$, we have $\norma{\rho(t)}_{\L\infty}\leq \norma{\bar \rho}_{\L\infty} e^{Ct}$
with $C$ a constant dependent on $\lip_x(V)$, $\lip_r(V)$, $\lip_x(\eta)$ and  $\norma{\bar \rho}_{\Mes}$.
\item Let $\bar \rho, \bar \sigma \in \Mes^+(\reali^d)^k$ such that for any $i$, $\norma{\bar\rho^i}_{\Mes}=\norma{\bar \sigma^i}_{\Mes}$. Let $\rho$ and $\sigma$ be the solutions of (\ref{eq:system}) associated to the initial conditions $\bar \rho$ and $\bar \sigma$,  then we have the estimate:
\[
 \mathscr{W}_1(\rho_t, \sigma_t)\leq e^{Kt}\mathscr{W}_1(\bar\rho, \bar \sigma)\,,
\]
where $K=\lip_x(V)+\lip_r (V)\lip_x(\eta) \norma{\bar \rho}_{\Mes}+ \lip_r(V)\lip_x (\eta) \norma{\bar\rho}_{\Mes}$ and  $\mathscr{W}_1(\rho_t, \sigma_t)$ is the Wasserstein distance of order one between $\rho_t$ and $\sigma_t$.
\end{itemize}
These properties are described in Corollary \ref{cor:L1} and in Proposition \ref{prop:stabGeneral}. The Wasserstein distance of order one is rigorously defined in Section \ref{sec:tools}.
\end{remark}

\begin{remark}
In Theorem \ref{thm:main} as well as in the other results of this papers, it is in fact sufficient to require that $V^i(t,x,r)$ is $\L\infty$ in $t, x$ and $\Lloc\infty$ in $r$. Indeed, $\rho\conv \eta^i$ is uniformly bounded by $\norma{\bar\rho}_{\Mes}\norma{\eta}_{\L\infty}=M$. Consequently, denoting $B_M$ the  closed ball of center 0 and radius $M$ in $\reali^k$,  it is sufficient to have $V^i\in \L\infty(\rpic\times\reali^d\times B_M)$. 

Note also that, restricting the definition of $V$ and $\eta$ to the time interval $[0,T_0]$, we obtain a solution defined on the same time interval. Consequently, we can as well ask only $V$ and $\eta$ to be $\Lloc\infty$ in time instead of $\L\infty$.
\end{remark}

The system (\ref{eq:system}) stands for a variety of models. Let us present  first a macroscopic model of pedestrian traffic.
In a macroscopic pedestrian crowd model, $\rho$ is the density of the crowd at time $t$ and position $x$ and $V$ is a vector field giving the speed of the pedestrian. 
According to the choice of $V$,  various behaviors can be observed. Several authors already studied pedestrian traffic in  two dimensions space ($N=2$). Some of these models are local in $\rho$ (see \cite{BellomoDogbe_review, CosciaCanavesio, Hughes1, Hughes2, MauryChupin, MauryChupinVenel}) ; other models use not only the local density $\rho(t,x)$ but the entire distribution of $\rho$, typically they depend on the convolution product $\rho(t)*\eta$ (see \cite{ColomboGaravelloMercier, ColomboHertyMercier, ColomboLecureux, ColomboMercier, DiFrancesco, PiccoliTosin}) which represents the spatial average of the density. 
Within the framework of (\ref{eq:system}), we can study the models presented in 
 \cite{ColomboHertyMercier, ColomboLecureux, ColomboMercier}.
 In \cite{ColomboHertyMercier}, the authors considered for $V$ the expression
\[
V= v(\rho*\eta)\vec v(x)\,,
\]
where $v$ is a scalar function giving the speed of the pedestrians; $\eta$ is a convolution kernel averaging the density; and  $\vec v(x)$ is a bounded vector field giving the direction the pedestrian located in $x$ will follow. This model is more adapted to the case of panic in which pedestrians will not deviate from their trajectory and will adapt their velocity to the averaged density. Indeed, even if the density is maximal on a given trajectory, if the averaged  density is not maximal, the pedestrians will push, trying all the same to reach their goal. This behavior can be associated with rush phenomena in which people can even die due to overcompression  (e.g. on Jamarat Bridge in Saudi Arabia, see \cite{HelbingJohanssonZein}).
A similar model was introduced in Piccoli \& Tosin \cite{CristianiPiccoliTosin, PiccoliTosin}, where the authors instead of an isotropic convolution kernel, consider a nonlocal functional taking into account the direction in which the pedestrians are looking. 

In \cite{ColomboHertyMercier},  the authors study the scalar case in the framework of Kru\v zkov entropy solutions. They obtained existence and uniqueness of weak entropy solutions under the hypotheses $v\in \W2\infty(\reali^+, \reali^+)$, $\vec v\in (\W2\infty\cap\W21)( \reali^d, \reali^d)$, and $\eta\in (\W2\infty\cap\L1)(\reali^d, \reali)$.   
This result was slightly improved in \cite{ColomboLecureux} where, under the same set of hypotheses on $v$, $\vec v$ and $\eta$, the authors consider a system instead of a scalar equation and obtain global in time existence and uniqueness of entropy solutions.
We recover these results with lighter hypotheses. Indeed, although we consider weak measure solutions, these in fact  are unique and consequently coincide  with  entropy solutions when the initial condition is in $\L1$.


Another model of crowd dynamics that  we recover consists in the coupling of a group of density $\rho(t,x)$ with an isolated agent located in $p(t)$. This  can modelize for example the interaction between groups of preys of densities $\rho$ and an isolated predator located in $p$. Such a model was introduced in \cite{ColomboMercier} where the authors obtained existence and uniqueness of weak entropy solutions under  very strong hypotheses. 
We  recover here  partially the results concerning the coupling PDE/ODE of \cite{ColomboMercier}. 
Indeed,  the measure framework allows us also to introduce particles/individuals through Dirac measures. For instance, let us assume that $k=k_0+k_1$ such that $\rho^1, \ldots, \rho^{k_0}$ are in fact functions belonging to  $\L\infty(\rpic, \L1(\reali^d, \rpic))$ and that 
 $\delta_{p^1},\ldots, \delta_{p^{k_1}}$ are Dirac measures located in $p^1(t), \ldots, p^{k_1}(t)\in \reali^d$. Let us denote
 $\rho=(\rho^1, \ldots, \rho^{k_0})\in \reali^{k_0}$ and $p=(p^1, \ldots, p^{k_1})\in \M_{d,k_1}$. We also denote with $V^i$ (resp.  $\eta^i$) the vector fields (resp. kernels) associated to $\rho$, and with $U^i$ (resp. $\lambda^i$) the vector fields  (resp. kernels)  associated to $p$.
Note that $\delta_{p^j}*\lambda^{i,j}(x)=\lambda^{i,j} (x-p^j)$. 
By definition of weak measure solution (see Definition \ref{def:weaksol}), 
if $p_i\in \C1([0,T], \reali^d)$, the Dirac measures are satisfying, 
for any $i\in \{1, \ldots, k_1\}$
\begin{eqnarray*}
 \dot p_i(t)=U^i\left[t,\, p_i(t),\, \rho_t\conv \eta_t^i(p_i(t)), \,\lambda^{i,1}\big(p_i(t)-p_1(t)\big),\ldots,\lambda^{i,k_1}\big(p_i(t)-p_{k_1}(t)\big) \right] \,,
\end{eqnarray*}
which can be rewritten
\begin{eqnarray*}
\dot p^i(t)= \Phi^i\left(t, p(t), \rho_t\conv\eta_t^i\left(p^i(t) \right) \right)\,. 
\end{eqnarray*}
Consequently, in this case, the system (\ref{eq:system}) becomes
\[
\left\{
\begin{array}{ll}
 \pt_t \rho^i +\div \left(\rho^i \,V^i\left( t, \,x,\, \eta_t^i \conv \rho_t, \,\left(\lambda^{i,j} (x-p^j(t))\right)_{j=1}^{k_1}  \right)\right)=0\,,\qquad &i \in \{1,\ldots, k_0\}\,,\\[5pt]
 \dot p^j(t)=\Phi^j\left(t, \,p(t),\, \rho_t\conv\eta_t^j (p^j(t)) \right)\,, &j \in \{1,\ldots, k_1\}\,.\\
\end{array}
\right.
\]
So we are coupling ODE with conservation laws.

System (\ref{eq:system}) can also stands for models of aggregation, studied in \cite{Carrillo} under weaker hypotheses admitting   singular kernels. 

The system (\ref{eq:system}) comprised also a model of particles' sedimentation 
$$\pt_t \rho+\pt_x ((\rho*\eta) \, \rho )=0$$ 
which has been introduced \cite{Rubinstein}  and  studied in \cite{Zumbrun}, where the author proved existence and uniquness of weak solutions with initial condition in $\L\infty$. 

Finally, a similar nonlocal model is the one the supply-chain model \cite{arm, arm2}, in which we consider the nonlocal term $\int_0^1 \rho(t,x)\d{x}$ instead of a convolution product. This last model was studied for example in  \cite{CoronKawskiWang} with furthermore boundary conditions in $x=0$ and $x=1$.

\medskip

The proof of Theorem  \ref{thm:main} is divided into two main steps. First, we prove some a priori properties of the solutions (see Section \ref{sec:solution}): mainly, we prove that the weak measure solutions of (\ref{eq:system}) coincide with the Lagrangian solutions of this system. Important consequences are the conservation of the regularity of the initial condition  and the strong continuity in time in the case the solution is a function, as stated in Remark \ref{rk:propsol}. 

Second, we prove the existence and uniqueness of Lagrangian solutions thanks to a fixed point argument (see Section \ref{sec:proofmaintheorem}). Indeed, introducing the set of probability measures endowed with the Wasserstein distance of order one,  we are able to prove a stability estimate with respect to the nonlocal term (see Section \ref{sec:stability}). The technique used  there  is quite similar to the one of Loeper \cite{Loeper}, who studied the Vlasov-Poisson equation and the Euler equation in vorticity formulation.

This article is organized as follows: in Section \ref{sec:solution} we define the two different notions of solution and prove that they coincide. In Section \ref{sec:tools} we give some useful tools on optimal tranport; in Section \ref{sec:stability} we prove an important lemma giving a stability estimate and in Section \ref{sec:proofmaintheorem}, we give the proof of Theorem \ref{thm:main}.


\section{Notion of solutions}\label{sec:solution}

\subsection{General notations}
Let $d\in \N$ be the space dimension and $k\in \N$ be the size of the system. 
In the following, $\M_{d,k}$ is the set of  matrices of size $d\times k$ with real values and $\M_{k}$ is the set of  matrices of size $k\times k$ with real values.  We denote by $\Mes(\reali^d)$ (resp. $\Mes^+(\reali^d)$) the set of bounded (resp. bounded and positive) measure on $\reali^d$ and by $\mathcal{P}(\reali^d)$ the set of probability measures on $\reali^d$, that is the set of bounded positive measures with total mass 1. 

In the following the Lipschitz norms with respect to $x$ or $r$ are taken uniformly with respect to the other variables.  That is to say, for example:
\[
\lip_x(V)=\sup_{t\in \rpic, r\in \reali^k} \{\lip_x(V(t, \cdot, r)) \}\,.
\]

Let us also underline that in the computations, we considered  the norm 1 on the vectors in $\reali^k$. When considering another norm, a constant depending on $k$ appears in the various estimates. Similarly, if $\rho=(\rho^1, \ldots, \rho^k)\in \Mes(\reali^d)^k$ we define the total measure of $\rho$ as $\norma{\rho}_{\Mes}=\norma{\rho^1}_{\Mes}+\ldots +\norma{\rho^k}_{\Mes}$.

The space $\L\infty([0,T], \Mes^+(\reali^d))$ consists of the parametrized measures $\mu=(\mu_t)_{t\in [0,T]}$
 such that, for any $\phi \in \Cc0 (\reali^d, \reali)$,  the application $t\mapsto \int_{\reali^d} \phi \d{\mu_t(x)}$ is measurable and such that $\textrm{ess}\sup_{t\in [0,T]} \norma{\mu_t}_{\Mes}<\infty$.
 
\subsection{Weak measure solutions}\label{def:weaksol}
We say that $\rho \in \L\infty([0,T],\Mes^+(\reali^d)^k)$ is a \emph{weak measure solution} of (\ref{eq:system}) with initial condition $\bar \rho\in \Mes^+(\reali^d)^k$ if, for any $i\in \{1, \ldots, k\}$ and for any test-function $\phi \in \Cc\infty(]-\infty, T[\times \reali^d, \reali)$ we have
\[
\int_0^T \int_{\reali^d} \big[ \pt_t\phi + V^i(t, x, \rho\conv \eta^i)\cdot\nabla\phi \big] \d{\rho^i_t (x)}\d{t} +\int_{\reali^d} \phi(0,x)\d{\bar \rho^i(x)}=0\,.
\]

\begin{remark}
A priori for weak measure solutions of  the continuity equation $\pt_t \rho+\div (\rho b)=0$, with a given vector field $b$, we have only continuity in time for the weak topology (see \cite{DafermosBook}), that is to say, for all $i\in \{1, \ldots, k\}$, for all $\phi\in \Cc0(\reali^d, \reali)$, the application $t\mapsto \int_{\reali^d} \phi(x)\d{\rho^i_t(x)}$  is continuous, up to redefinition of $\rho_t$ on a negligible set of times.  

In the case of the system (\ref{eq:system}),  we have a gain of regularity in time when the initial condition is a function in
$ \L1(\reali^d, (\reali^+)^k)$ (see Corollary \ref{cor:L1}).
\end{remark}

\subsection{Push-forward and change of variable}
When $\mu $ is a measure on $\Omega$ and $T:\Omega\to \Omega'$ a measurable map, we denote $T_\sharp \mu$ the push-forward of $\mu$, that is  the measure on $\Omega'$ such that, for every $\phi\in \Cc0(\Omega', \reali)$, 
$$\int_{\Omega'}\phi(x)\d{T_\sharp\mu(x)}=\int_{\Omega}\phi\left( T(y)\right) \d{\mu(y)}\,.$$  

If we assume that $\mu$ and $\nu= T_\sharp \mu$ are absolutely continuous with respect to the  Lebesgue measure so that there exist $f, g\in \L1$ such that $\d\mu(x)=f(x)\d{x}$ and $\d\nu(y)=g(y)\d{y}$, and that $T$ is a $\lip$-diffeomorphism, then we have the \emph{change of variable formula}
\begin{equation}\label{eq:changeofvariable}
f(x)=g(T(x))\modulo{\det(\nabla T(x))}\,.
\end{equation}

Besides, we denote $\mathbb{P}_x:\reali^d\times\reali^d\to \reali^d$ the \emph{projection on the first coordinate}; that is, for any $(u,v)\in \reali^d\times\reali^d$,  $\mathbb{P}_x(u,v)=u$. In a similar way, $\mathbb{P}_y:\reali^d\times\reali^d\to \reali^d$ is the \emph{projection on the second coordinate}; that is, for any $(u,v)\in \reali^d\times\reali^d$,  $\mathbb{P}_y(u,v)=v$.

\subsection{Lagrangian solutions}\label{def:sollag}
We say that $\rho\in \L\infty( [0,T], \Mes^+(\reali^d)^k)$ is a Lagrangian solution  of (\ref{eq:system}) with initial condition $\bar \rho\in \Mes^+(\reali^d)^k$ if, 
 for any $i\in \{1, \ldots, k\}$, there exists an ODE flow $X^i: [0,T]\times \reali^d \to \reali^d$, that is a solution of 
\[
\left\{
\begin{array}{rl}
\dst\frac{\d{ X^i}}{\d{t}}(t,x)=&V^i\left(t, X^i(t,x), \rho_t\conv \eta_t^i(X^i(t,x)) \right)\,,\\[7pt]
\dst X^i(0,x)=&x\,;
\end{array}
\right.
\]
and such that $\rho^i_t={X^i_t}_\sharp \bar\rho^i$ where $X^i_t:\reali^d\to\reali^d$ is the map defined as $X^i_t(x)=X^i(t,x)$ for any $(t,x)\in [0,T]\times\reali^d$.

\begin{remark}\label{rk:blip}
Assume $V$ satisfies \textbf{(V)} and  $\eta$ satisfies \textbf{($\boldsymbol{\eta}$)}. Then, for any $\rho\in \L\infty([0,T], \Mes^+(\reali^d)^k)$, the vector fields $b=V(t, x,\rho_t\conv \eta_t)$  Lipschitz in $x$ and 
\[
\lip_x(b)\leq \lip_x (V)+\lip_r(V) \lip_x(\eta)\norma{\rho_t}_{\Mes}\,.
\] 
Consequently, if $\norma{\rho_t}_{\Mes}$ is uniformly bounded, the ODE flow $X^i$ above is always well-defined, for a fixed $\rho$.

If $\bar \rho \in\L1(\reali^d, \reali^+)$, then the push-forward  formula (\ref{eq:changeofvariable}) becomes, for a.e. $(t,x)\in\rpic\times \reali^d$, 
\begin{equation}\label{eq:cov}
\rho^i(t,X^i(t,x))=\bar\rho^i (x)\exp\left(-\int_{0}^t \div V^i\left(\tau, X^i(\tau,x),\rho_\tau*\eta_\tau(X^i(\tau, x)) \right) \d{\tau}\right) \,.
\end{equation}
\end{remark}

We now show that the two notions of solution in fact coincide.

\begin{theorem}
If $\rho$ is a Lagrangian solution of (\ref{eq:system}), then $\rho$ is also a weak measure solution of (\ref{eq:system}).
Conversely, if $\rho$ is a weak measure solution of (\ref{eq:system}), then $\rho$ is also a Lagrangian solution of (\ref{eq:system}).
\end{theorem}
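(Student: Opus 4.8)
The plan is to prove the two implications separately, exploiting the fact that for a \emph{fixed} $\rho \in \L\infty([0,T],\Mes^+(\reali^d)^k)$ the coefficient $b^i(t,x) = V^i(t,x,\rho_t\conv\eta^i_t)$ is a bounded vector field, Lipschitz in $x$ uniformly in time (Remark \ref{rk:blip}). Thus the whole question reduces, componentwise, to the equivalence between distributional solutions and Lagrangian (push-forward along the flow) solutions of the \emph{linear} continuity equation $\pt_t \mu + \div(\mu\, b) = 0$ with a given Lipschitz-in-$x$, bounded vector field $b$. This is classical, but I would reproduce the argument in the measure setting.

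\medskip\noindent\textbf{Lagrangian $\Rightarrow$ weak.} Fix $i$ and write $b(t,x) = V^i(t,x,\rho_t\conv\eta^i_t)$, $X_t = X^i_t$, $\rho_t = {X_t}_\sharp \bar\rho^i$. First I would observe that $t\mapsto \rho_t$ is weakly continuous: for $\phi \in \Cc0$, $\int \phi\,\d\rho_t = \int \phi(X_t(x))\,\d\bar\rho^i(x)$, and by dominated convergence (using $|X_t(x)-X_s(x)| \leq \norma{b}_{\L\infty}|t-s|$ and boundedness of $\bar\rho^i$) this is continuous in $t$; in particular the map $t\mapsto\int\phi\,\d\rho_t$ is measurable, so $\rho \in \L\infty([0,T],\Mes^+)$ as required. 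Then, for a test function $\phi \in \Cc\infty(]-\infty,T[\times\reali^d)$, I would compute
\[
\frac{\d}{\d t}\int_{\reali^d}\phi(t,x)\,\d\rho^i_t(x) = \frac{\d}{\d t}\int_{\reali^d}\phi(t,X_t(x))\,\d\bar\rho^i(x) = \int_{\reali^d}\big[\pt_t\phi(t,X_t(x)) + \nabla\phi(t,X_t(x))\cdot \dot X_t(x)\big]\d\bar\rho^i(x),
\]
where differentiation under the integral sign is justified by the Lipschitz regularity of $X$ in $t$ (uniform in $x$) and the compact support of $\phi$. Using the ODE $\dot X_t(x) = b(t,X_t(x))$ and pushing forward back to $\rho^i_t$, the integrand becomes $\pt_t\phi + b\cdot\nabla\phi$ integrated against $\rho^i_t$. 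Integrating in $t$ over $[0,T]$, using $\phi(T,\cdot)=0$ and $X_0 = \mathrm{id}$ so $\rho^i_0 = \bar\rho^i$, yields exactly the weak formulation of Section \ref{def:weaksol}.

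\medskip\noindent\textbf{Weak $\Rightarrow$ Lagrangian.} This is the harder direction. Fix $i$ and again set $b(t,x) = V^i(t,x,\rho_t\conv\eta^i_t)$, which is bounded and globally Lipschitz in $x$; hence the flow $X_t = X^i_t$ of $\dot X_t = b(t,X_t)$, $X_0 = \mathrm{id}$, is well defined and is a bi-Lipschitz homeomorphism of $\reali^d$ for each $t$. Set $\tilde\rho_t := {X_t}_\sharp\bar\rho^i$; by the previous implication $\tilde\rho$ is a weak measure solution with the same initial datum $\bar\rho^i$. It then suffices to prove \emph{uniqueness} of weak measure solutions to the linear equation $\pt_t\mu + \div(\mu\,b)=0$ with given initial datum, for this fixed Lipschitz $b$ — this will force $\rho^i_t = \tilde\rho_t = {X_t}_\sharp\bar\rho^i$. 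The standard route is duality: given $\psi \in \Cc\infty(\reali^d)$ and $s\in(0,T)$, solve the backward transport equation $\pt_t w + b\cdot\nabla w = 0$ on $[0,s]$ with $w(s,\cdot)=\psi$, whose solution is $w(t,x) = \psi(X_{s}\circ X_t^{-1}(x))$ — Lipschitz in $x$, Lipschitz in $t$, but generally \emph{not} $\C\infty$, so one must either enlarge the class of admissible test functions in the weak formulation by a density/mollification argument, or regularize $b$ and pass to the limit. Plugging (a mollification of) $w$ into the weak formulation of $\rho^i - \tilde\rho^i$ makes the bulk term vanish and leaves $\int\psi\,\d\rho^i_s = \int\psi\,\d\tilde\rho^i_s$; since $s$ and $\psi$ are arbitrary and both curves are weakly continuous, $\rho^i = \tilde\rho^i$.

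\medskip\noindent\textbf{Main obstacle.} The technical crux is the weak-to-Lagrangian direction, specifically justifying the use of the non-smooth test function $w(t,x) = \psi(X_s\circ X_t^{-1}(x))$ in a weak formulation that only tests against $\Cc\infty$ functions. I expect to handle this by a mollification argument: regularize $b$ to $b^\varepsilon$ (smooth, with uniformly bounded Lipschitz constants), note $b^\varepsilon \to b$ locally uniformly, solve the backward equation with $b^\varepsilon$ to get a smooth $w^\varepsilon$, use it as a legitimate test function, and pass to the limit $\varepsilon\to 0$ using the stability of flows under uniform convergence of vector fields (Gronwall) together with the uniform mass bound on $\rho^i_t$ and the compact-support structure inherited from $\psi$. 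An alternative, perhaps cleaner, is to first show by mollification that the weak formulation in fact holds for all $\phi$ that are Lipschitz in $(t,x)$, compactly supported, with the stated integral identity, and then use $w$ directly. Either way, the linear/uniform-in-$\varepsilon$ estimates are routine; the only place demanding care is the interchange of limits.
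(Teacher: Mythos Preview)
Your proposal is correct and follows essentially the same route as the paper: both directions are reduced to the linear continuity equation with the frozen Lipschitz field $b^i = V^i(t,x,\rho_t\conv\eta^i_t)$, the forward implication is the same chain-rule computation along the flow, and the converse proceeds by constructing the Lagrangian solution $\tilde\rho$ (the paper's $\sigma$) with the same datum and proving uniqueness of weak solutions via a dual test function built from characteristics. The only difference is cosmetic --- the paper solves the inhomogeneous equation $\pt_t\phi + b^i\cdot\nabla\phi = \psi$ to deduce $\int_0^T\int\psi\,\d u_t\,\d t = 0$, whereas you solve the homogeneous backward problem with terminal datum $\psi$ at a fixed time $s$; if anything you are more scrupulous than the paper about the regularity gap (a Lipschitz flow yields only Lipschitz, not $\C1$, test functions) and the need for a mollification step.
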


\begin{proof}
\noindent\textbf{1.}  Let $\rho$ be a Lagrangian solution of (\ref{eq:system}).  Let us denote $b^i=V^i(t, x, \rho\conv\eta^i)$ and let  $X^i$ be the ODE flow associated to $b^i$. Then, for any $\phi\in \Cc\infty(]-\infty, T[\times \reali^d, \reali)$,  we have
\begin{eqnarray*}
&&\int_0^T\int_{\reali^d} \left(\pt_t \phi (t,x)+b^i(t,x)\cdot \nabla\phi (t,x)\right)\d{\rho_t(x)}\d{t}\\
&=& \int_0^T \int_{\reali^d}\left( \pt_t\phi(t,X_t^i(x))+b^i(t,X^i_t(x) )\cdot\nabla\phi (t,X_t^i(x))\right)\d{\bar\rho(x)}\d{t}\\
&=&\int_0^T \int_{\reali^d} \frac{\d{}}{\d{t}}\left(\phi(t,X_t^i(x)) \right) \d{\bar\rho(x)}\d{t}\\
&=&\int_{\reali^d} \phi(T,X^i(T,x))\d{\bar\rho(x)}-\int_{\reali^d}\phi(0,x)\d{\bar\rho(x)} = -\int_{\reali^d}\phi(0,x)\d{\bar\rho(x)}\,,
\end{eqnarray*}
which proves that $\rho$ is also a weak measure solution.

\noindent\textbf{2.}
Let $\rho$ be a weak measure solution of (\ref{eq:system}). For any $i\in \{1, \ldots, k\}$,  let us denote $b^i(t,x)=V^i(t, x, \rho\conv\eta^i)$. Let $\sigma$ be the Lagrangian solution of the equation
\begin{equation}\label{eq:lag}
\pt_t\sigma^i +\div(\sigma^i b^i)=0\,,\qquad\qquad\sigma^i(0,\cdot)=\bar\rho^i\,,
\end{equation}
which exists and is unique since $b^i$ is Lipschitz as noted in Remark \ref{rk:blip}.
Then, arguing similarly as in point 1,  $\sigma$ is also a weak measure solution to (\ref{eq:lag}). 
Denoting $u^i=\rho^i-\sigma^i$, we obtain that $u^i$ is a weak measure solution of the equation $\pt_t u^i+\div (u^i\,b^i)=0$ with initial condition $u^i(0,\cdot)=0$. 
Consequently, for any $\phi\in \Cc\infty(]-\infty, T[\times \reali^d, \reali)$,
\[
\int_0^T\int_{\reali^d} \left(\pt_t \phi+b^i(t,x) \cdot\nabla\phi\right)\d{u_t}\d{t}=0\,.
\]
Let $\psi \in \Cc0(]-\infty, T[\times\reali^d, \reali)$. Since $b^i\in \L\infty([0,T]\times \reali^d, \reali^d)$ is Lipschitz in $x$, by computation along the characteristics, we can find $\phi\in \Cc1(]-\infty, T[\times\reali^d, \reali )$ so that $\psi=\pt_t\phi+b^i(t,x)\cdot \nabla \phi$. Hence,   for any $\psi\in \Cc0(]-\infty, T[\times\reali^d, \reali)$, we have $\int_0^T \int_{\reali^d}\psi \d{u_t}\d{t}=0$, which implies $u\equiv 0$ a.e. and $\rho=\sigma$ a.e.
Consequently, we have also $b^i(t,x)=V^i(t, x, \sigma\conv\eta^i)$, and $\sigma=\rho$ is  finally a Lagrangian solution of (\ref{eq:system}). 
\end{proof}

\begin{definition}\label{def:weaklag}
As a consequence of the previous theorem, in the following we simply call \emph{solution} of (\ref{eq:system}) a weak measure solution or a Lagrangian solution of (\ref{eq:system}), that in fact coincide.
\end{definition}

It is now possible to prove  some of the properties given in Remark \ref{rk:propsol}.
\begin{corollary}\label{cor:L1}
 Assume that $V$ satisfies \textbf{(V)} and $\eta $ satisfies \textbf{($\boldsymbol{\eta}$)}. 
 Let $\rho\in \L\infty([0,T], \Mes^+(\reali^d)^k)$ be a solution to (\ref{eq:system}) with initial condition $\bar\rho\in \Mes^+(\reali^d)^k$.
\begin{itemize} 
\item If $\bar\rho\in  \L1(\reali^d,(\reali^+)^k)$. Then we have $\rho\in \C0([0,T], \L1(\reali^d, (\reali^+)^k))$ and for all time $t\in [0,T]$, all $i\in\{1, \ldots, k\}$,  $\norma{\rho^i(t)}_{\L1}=\norma{\bar\rho^i}_{\L1}$.
\item 
If furthermore $\bar \rho \in (\L1\cap\L\infty)(\reali^d,(\reali^+)^k)$, then for all $t\in [0,T]$ we have $\rho(t)\in \L\infty(\reali^d, (\reali^+)^k)$  and we have the estimate 
\[
\norma{\rho(t)}_{\L\infty}\leq \norma{\bar\rho}_{\L\infty}e^{Ct}\,,
\]
where $C$ depends on $\norma{\bar\rho}_{\Mes}$,  $V$ and $\eta$.
\end{itemize}
\end{corollary}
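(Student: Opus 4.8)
The plan is to exploit the equivalence between weak and Lagrangian solutions proved in the previous theorem, and to transport the $\L1$ and $\L\infty$ information along the ODE flow using the push-forward/change-of-variable formula recorded in Remark \ref{rk:blip}. Throughout, fix $\rho \in \L\infty([0,T], \Mes^+(\reali^d)^k)$ a solution with initial datum $\bar\rho$, set $b^i(t,x) = V^i(t,x,\rho_t \conv \eta^i_t)$, and let $X^i$ be the associated ODE flow. By Remark \ref{rk:blip}, since $\norma{\rho_t}_{\Mes}$ is essentially bounded on $[0,T]$ (by definition of the solution space), $b^i$ is bounded and Lipschitz in $x$ uniformly in $t$; in particular each $X^i_t$ is a bi-Lipschitz homeomorphism of $\reali^d$, with $\modulo{\det \nabla X^i_t(x)}$ bounded above and below by positive constants depending only on $T$, $\lip_x(V)$, $\lip_r(V)$, $\lip_x(\eta)$ and $\norma{\rho}_{\L\infty(\Mes)} = \norma{\bar\rho}_{\Mes}$ (the last equality being part of what we prove, but one may bootstrap: the mass bound on $\rho_t$ is an a priori consequence of $\rho$ being a positive solution in the stated class).

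For the first bullet: if $\bar\rho^i \in \L1$, then since $X^i_t$ is a bi-Lipschitz diffeomorphism and $\rho^i_t = (X^i_t)_\sharp \bar\rho^i$, the change-of-variable formula (\ref{eq:changeofvariable}) shows $\rho^i_t$ is absolutely continuous with density given explicitly by (\ref{eq:cov}), hence $\rho^i_t \in \L1(\reali^d, \reali^+)$. Conservation of mass $\norma{\rho^i(t)}_{\L1} = \norma{\bar\rho^i}_{\L1}$ is immediate from the push-forward definition applied with a sequence $\phi_n \uparrow 1$ (or directly from $\int_{\reali^d} \d{\rho^i_t} = \int_{\reali^d} \d{\bar\rho^i}$, using positivity). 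For the strong continuity $t \mapsto \rho^i(t) \in \L1$: one shows $t \mapsto X^i_t$ is continuous from $[0,T]$ into $\C0_{\loc}(\reali^d,\reali^d)$ with a uniform modulus (from Gr\"onwall applied to the ODE, using boundedness of $b^i$ and of the flow's spatial Lipschitz constant), which gives weak-$*$ continuity of $t\mapsto\rho^i_t$ in $\Mes$; combined with conservation of the total mass (no escape to infinity, no loss of mass) this upgrades to narrow continuity, and then — because the measures are all absolutely continuous with the explicit formula (\ref{eq:cov}) in which the exponential factor depends continuously on $t$ — to $\L1$-norm convergence via Scheff\'e's lemma. After redefinition on a negligible set of times one gets $\rho \in \C0([0,T], \L1)$.

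For the second bullet: assume in addition $\bar\rho^i \in \L\infty$. From the explicit formula (\ref{eq:cov}),
\[
\norma{\rho^i(t)}_{\L\infty} \leq \norma{\bar\rho^i}_{\L\infty} \, \exp\!\left( \int_0^t \norma{\div V^i(\tau,\cdot,\cdot)}_{\L\infty} \, \d\tau \right),
\]
and $\modulo{\div_x V^i(\tau, X^i(\tau,x), \rho_\tau*\eta_\tau(X^i(\tau,x)))} \leq \lip_x(V) + \lip_r(V)\,\lip_x(\eta)\,\norma{\rho_\tau}_{\Mes} \leq \lip_x(V) + \lip_r(V)\,\lip_x(\eta)\,\norma{\bar\rho}_{\Mes} =: C$ by the chain rule and the bound on $\nabla(\rho_\tau * \eta_\tau)$ from $\norma{\rho_\tau}_{\Mes} = \norma{\bar\rho}_{\Mes}$. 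This yields $\norma{\rho(t)}_{\L\infty} \leq \norma{\bar\rho}_{\L\infty} e^{Ct}$ after summing over $i$ (with the convention on the $1$-norm stated in the General Notations).

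The main obstacle is the strong $\L1$-continuity in time: weak-$*$ continuity is cheap, but promoting it to $\C0([0,T],\L1)$ requires genuinely using the structure — either through Scheff\'e's lemma applied to the explicit density (\ref{eq:cov}), whose exponential integrand is continuous in $t$, or through an approximation of $\bar\rho^i$ by compactly supported bounded densities and a uniform estimate; I would take the Scheff\'e route, which is cleanest given that we already have (\ref{eq:cov}) in hand. Everything else is a routine consequence of the Lagrangian representation and Gr\"onwall.
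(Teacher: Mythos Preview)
Your argument for mass conservation and for the $\L\infty$ bound is exactly the paper's: use the Lagrangian representation, the push-forward formula, and the change-of-variable identity (\ref{eq:cov}) together with the bound $\modulo{\div_x b^i}\leq \lip_x(V)+\lip_r(V)\lip_x(\eta)\norma{\bar\rho}_{\Mes}=C$. Nothing to add there.

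For the $\L1$-continuity in time, the paper does not go through Scheff\'e: it indicates instead a direct estimate of $\norma{\rho_t-\rho_s}_{\L1}$ via Egorov's theorem (details omitted) and alternatively invokes the DiPerna--Lions renormalized-solution theory. Your Scheff\'e route, as you describe it, has a small gap: you justify only that the exponential factor in (\ref{eq:cov}) is continuous in $t$, but the density at a fixed point $y$ is $\bar\rho^i\big((X^i_t)^{-1}(y)\big)$ times a Jacobian, and for a merely $\L1$ function $\bar\rho^i$ the composition $\bar\rho^i\circ (X^i_t)^{-1}$ need not converge pointwise a.e.\ as $t\to s$ (continuity of $t\mapsto (X^i_t)^{-1}(y)$ does not help, since $\bar\rho^i$ has no pointwise regularity). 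So the a.e.\ hypothesis of Scheff\'e is not verified by what you wrote. The approximation alternative you mention --- replace $\bar\rho^i$ by $\Cc0$ data, use the uniform bi-Lipschitz bound on the flow to control the error uniformly in $t$, and pass to the limit --- does work cleanly and is in fact the spirit of the paper's Egorov-based estimate. I would switch to that route rather than Scheff\'e.
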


\begin{proof}
Let $\rho$ be a solution of (\ref{eq:system}) with initial condition $\bar \rho\in \L1(\reali^d, (\rpic)^k)$. According to Definition \ref{def:weaklag}, $\rho$ is a Lagrangian solution associated to a flow $X$ and we have immediately that $\norma{\bar\rho}_{\L1}=\norma{\rho(t)}_{\L1}$. 

Besides, as $b^i(t,x)=V^i(t, x,\rho\conv\eta^i)\in \L\infty([0,T]\times \reali^d, \reali^d)$  is bounded in $t$ and Lipschitz in $x$, then $X_t^i\in \lip(\reali^d, \reali^d)$ and we can use the change of variable formula (\ref{eq:cov}). If $\bar \rho\in \L\infty(\reali^d, \reali^k)$, with 
\[
C=\lip_x(V)+\lip_r(V)\lip_x(\eta) \norma{\bar \rho}_{\Mes}\,,
\]
we obtain the desired $\L\infty$ bound and $\rho(t)\in \L\infty$ for all $t\in [0,T]$.

The continuity in time can be proved  directly  by estimating $\norma{\rho_t-\rho_s}_{\L1}$ using Egorov Theorem. This computation is straightforward although a bit long so we prefer to omit the details.

Besides, note that the continuity in time is also ensured by the results of DiPerna \& Lions \cite[Section 2.II]{DiPernaLions} and the notion of renormalized solutions.

\end{proof}


\section{Some tools from optimal mass transportation}\label{sec:tools}

Let us remind the definition of the Wasserstein distance of order 1.
\begin{definition}\label{def:Wass}
Let $\mu$, $\nu$ be two Borel probability measures on $\reali^d$. We denote $\Xi\, (\mu,\nu)$  the set of \emph{plans}, that is the set of  probability measures $\gamma\in \Mes^+(\reali^d\times \reali^d)$ such that ${\mathbb P_x}_\sharp \gamma=\mu$ and ${\mathbb P_y}_\sharp \gamma=\nu$. We define the \emph{Wasserstein distance of order one} between $\mu$ and $\nu$  by
\begin{equation}\label{eq:wass1}
W_1(\mu,\nu)= \inf_{\gamma \in \,\Xi\, (\mu,\nu)} \int_{\reali^d\times\reali^d} \modulo{x-y}\d{\gamma (x,y)}\,.
\end{equation}

Let $\rho=(\rho^1, \ldots , \rho^k)$, $\sigma=(\sigma^1, \ldots , \sigma^k)$ be two  vectors such that $\rho^1, \ldots, \rho^k$ and $\sigma^1, \ldots, \sigma^k$ are  Borel probability measures on $\reali^d$. We define the \emph{Wasserstein distance of order one} between $\rho$ and $\sigma$, denoted $\mathscr{W}_1(\rho, \sigma)$, as
\begin{equation}\label{eq:wassk}
\mathscr{W}_1(\rho, \sigma)=\sum_{i=1}^k W_1(\rho^i, \sigma^i)\,.
\end{equation}
\end{definition}

\begin{remark}\label{rk:existmin}
By \cite[Theorem 1.3]{Villani}, for any $\mu, \nu\in \mathcal{P}(\reali^d)$, there exist a plan $\gamma_0\in \Xi(\mu,\nu)$ realizing the minimum in the Wasserstein distance so that
\[
W_1(\mu, \nu)=\int_{\reali^d} \modulo{x-y}\d{\gamma_0(x,y)}\,.
\]
\end{remark}

\begin{remark}\label{rk:inf}
Let $\bar \rho\in \Mes^+(\reali^d)^k$ be a probability measure ; and let $X, Y: \reali^d\to \reali^d$ be mappings such that $f =X_\sharp \d{\bar \rho}$ and $g = Y_\sharp \d{\bar \rho}$. Then, the probability measure $\gamma=(X, Y)_\sharp \d{\bar \rho}$ satisfies ${\mathbb{P}_x}_\sharp \gamma=f$, ${\mathbb{P}_y}_\sharp \gamma = g$ and so
\[
W_1(f,g)\leq \int_{\reali^d\times \reali^d }\modulo{x-y}\d{\gamma(x,y)}=\int_{\reali^d}\modulo{X-Y}\d{\bar\rho(x)}\,.
\]

\end{remark}

\begin{proposition}[cf. Villani {\cite[p. 207]{Villani}}] \label{prop:villani}
Let $\mu,\nu$ be two probability measures. The Wasserstein distance of order one between $\mu$ and $\nu$ satisfies
\[
W_1(\mu,\nu)=\sup_{\lip (\phi)\leq 1 } \int_{\reali^d}\phi(x)\left(\d{\mu(x)}-\d{\nu(x)}\right)\,.
\]
\end{proposition}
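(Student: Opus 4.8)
The statement to prove is the Kantorovich--Rubinstein duality for $W_1$, namely that
\[
W_1(\mu,\nu)=\sup_{\lip(\phi)\leq 1}\int_{\reali^d}\phi(x)\bigl(\d{\mu(x)}-\d{\nu(x)}\bigr)\,.
\]
The plan is to prove the two inequalities separately; the easy direction is the upper bound on the supremum, and the genuinely substantial direction is showing that the supremum is at least $W_1$.

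\medskip\noindent\textbf{Easy inequality ($\geq$).} First I would fix any admissible plan $\gamma\in\Xi(\mu,\nu)$ and any $1$-Lipschitz function $\phi$. Using the marginal conditions ${\mathbb P_x}_\sharp\gamma=\mu$ and ${\mathbb P_y}_\sharp\gamma=\nu$, I would write
\[
\int_{\reali^d}\phi\,\d\mu-\int_{\reali^d}\phi\,\d\nu=\int_{\reali^d\times\reali^d}\bigl(\phi(x)-\phi(y)\bigr)\d{\gamma(x,y)}\leq\int_{\reali^d\times\reali^d}\modulo{x-y}\d{\gamma(x,y)}\,,
\]
where the last step is just $\modulo{\phi(x)-\phi(y)}\leq\modulo{x-y}$. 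Taking the infimum over $\gamma$ on the right (which is attained, by Remark \ref{rk:existmin}) and then the supremum over $\phi$ on the left gives $\sup_\phi(\cdots)\leq W_1(\mu,\nu)$.

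\medskip\noindent\textbf{Hard inequality ($\leq$).} This is the Kantorovich duality proper, and I would not reprove it from scratch; since the paper already cites Villani \cite{Villani}, the cleanest route is to invoke the Kantorovich--Rubinstein theorem directly, observing that $c(x,y)=\modulo{x-y}$ is a lower semicontinuous (indeed continuous) metric cost, so strong duality holds and the dual optimum can be taken among $1$-Lipschitz functions. If one wanted a self-contained argument, the standard approach is: (i) the general Kantorovich duality
\[
\inf_{\gamma\in\Xi(\mu,\nu)}\int c\,\d\gamma=\sup\Bigl\{\int\varphi\,\d\mu+\int\psi\,\d\nu:\varphi(x)+\psi(y)\leq c(x,y)\Bigr\}\,,
\]
valid for l.s.c.\ $c$ by a Hahn--Banach/minimax argument; (ii) when $c$ is a metric, a $c$-concavity (``$c$-transform'') reduction shows one may take $\psi=-\varphi$ with $\varphi$ $1$-Lipschitz, since replacing $\varphi$ by $\varphi^{cc}$ only improves the objective and, for $c=\modulo{x-y}$, $c$-concave functions are exactly the $1$-Lipschitz functions and satisfy $\varphi^c=-\varphi$. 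The main obstacle is exactly step (i)--(ii): establishing strong duality with no duality gap (the minimax exchange needs a compactness or weak-$*$ closedness input on the set of plans, which holds because $\Xi(\mu,\nu)$ is tight) and then carrying out the $c$-transform identification. Given the paper's style of citing Villani for the existence of optimal plans, I expect the author to likewise quote \cite[p.~207]{Villani} and keep the proof to just the one-line easy direction, or omit it entirely.
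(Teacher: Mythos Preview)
Your proposal is correct, and your final expectation is exactly what happens: the paper gives no proof at all for this proposition, treating it as a citation to Villani. Your sketch of both inequalities (the elementary $\geq$ direction via any admissible plan, and the hard $\leq$ direction via Kantorovich duality plus the $c$-transform reduction for a metric cost) is the standard argument and is sound, but strictly more than the paper provides.
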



\section{The main stability estimate }\label{sec:stability}

In the following we consider probability measures instead of bounded positive measures. This is not a real restriction since we pass from one case to the other just by a rescaling.

Before giving a stability estimate in Proposition \ref{prop:stabGeneral}, we prove a technical lemma.

\begin{lemma}\label{lem:stability} 
Let $V$ satisfy \textbf{(V)} and $\eta $ satisfy \textbf{($\boldsymbol{\eta}$)}.  Let $r, s\in \mathcal{P}(\reali^d)^k$. 
For any $i\in\{1, \ldots, k\}$,  we have the following estimate
\begin{eqnarray*}
\norma{V^i(t, x,r\conv \eta_t^i)-V^i(t, x, s\conv \eta_t^i)}_{\L\infty}
&\leq & \lip_r( V^i)\, \lip_x( \eta^i)\, \mathscr{W}_1(r, s)\,.
\end{eqnarray*}
\end{lemma}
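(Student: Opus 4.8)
The plan is to fix $t$, $i$ and a point $x \in \reali^d$, and to estimate $\modulo{V^i(t,x,r\conv\eta_t^i) - V^i(t,x,s\conv\eta_t^i)}$ by combining the Lipschitz regularity of $V$ in its $r$-variable (from \textbf{(V)}) with a bound on $\modulo{r\conv\eta_t^i(x) - s\conv\eta_t^i(x)}$ in $\reali^k$. First I would write, using $\lip_r(V^i)$,
\[
\modulo{V^i(t,x,r\conv\eta_t^i) - V^i(t,x,s\conv\eta_t^i)} \leq \lip_r(V^i)\,\modulo{r\conv\eta_t^i(x) - s\conv\eta_t^i(x)}\,,
\]
where the norm on the right is the norm $1$ on $\reali^k$, so that it splits as $\sum_{j=1}^k \modulo{\eta_t^{i,j}*r^j(x) - \eta_t^{i,j}*s^j(x)}$.

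The heart of the argument is then the componentwise bound
\[
\modulo{\eta_t^{i,j}*r^j(x) - \eta_t^{i,j}*s^j(x)} = \modulo{\int_{\reali^d}\eta_t^{i,j}(x-y)\,\big(\d{r^j(y)} - \d{s^j(y)}\big)} \leq \lip_x(\eta^{i,j})\,W_1(r^j,s^j)\,.
\]
This is exactly the Kantorovich--Rubinstein duality of Proposition \ref{prop:villani}: the function $y\mapsto \eta_t^{i,j}(x-y)$ has Lipschitz constant at most $\lip_x(\eta^{i,j}) \leq \lip_x(\eta^i)$, and $r^j, s^j$ are probability measures, so testing the duality formula against (a suitable rescaling of) this function gives the claimed inequality. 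Summing over $j$ yields $\modulo{r\conv\eta_t^i(x) - s\conv\eta_t^i(x)} \leq \lip_x(\eta^i)\sum_{j=1}^k W_1(r^j,s^j) = \lip_x(\eta^i)\,\mathscr{W}_1(r,s)$ by the definition \eqref{eq:wassk}.

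Putting the two estimates together and taking the supremum over $x$ gives
\[
\norma{V^i(t,x,r\conv\eta_t^i) - V^i(t,x,s\conv\eta_t^i)}_{\L\infty} \leq \lip_r(V^i)\,\lip_x(\eta^i)\,\mathscr{W}_1(r,s)\,,
\]
as desired; since the right-hand side is independent of $t$, the estimate holds uniformly in time. There is no real obstacle here: the only minor points to be careful about are that $\lip_r(V^i)$ and $\lip_x(\eta^i)$ are, by the conventions fixed in Section \ref{sec:solution}, already uniform in the remaining variables, and that the duality in Proposition \ref{prop:villani} is stated for $1$-Lipschitz test functions, so one applies it to $\eta_t^{i,j}(x-\cdot)/\lip_x(\eta^{i,j})$ (or notes the trivial scaling of $W_1$). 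The choice of the norm $1$ on $\reali^k$ is what makes the constant come out cleanly; with another norm an extra dimensional constant would appear, as already remarked in the paper.
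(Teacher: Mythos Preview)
Your proof is correct and follows essentially the same approach as the paper: both use the Lipschitz regularity of $V^i$ in $r$ together with the Kantorovich--Rubinstein duality (Proposition~\ref{prop:villani}) applied componentwise to the rescaled test function $\eta_t^{i,j}(x-\cdot)/\lip_x(\eta^{i,j})$. The only cosmetic differences are that the paper applies the two ingredients in the reverse order and is slightly more explicit about the degenerate case $\lip_x(\eta^{i,j})=0$, where the convolution difference vanishes trivially.
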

In the previous lemma, the quantity $\mathscr{W}_1(r, s)$ could be infinite. If we restrict ourselves to bounded positive measures with first moment finite, then the quantity above is always finite.

\begin{proof} The proof follows from Proposition \ref{prop:villani}  on the Wasserstein distance. 
Note first that in the case $\lip(\eta^{i,j})=0$ then $\eta^{i,j}$ is constant and we have $ (r^j-s^j)*\eta^{i,j}(x)=0\leq \lip(\eta^{i,j})W_1(r^j, s^j)$.
Now, in the case $\lip(\eta^{i,j})\neq 0$, thanks to Proposition \ref{prop:villani}, we have
\begin{align*}
(r^j-s^j)*\eta^{i,j}(x)&=\int_{\reali^d}\eta^{i,j}(x-y)(\d{r^j(y)}-\d{s^j(y)})\\
&= \lip(\eta^{i,j}) \int_{\reali^d} \frac{\eta^{i,j}(x-y)}{\lip(\eta^{i,j})}(\d{r^j(y)}-\d{s^j(y)})\\
&\leq \lip(\eta^{i,j}) \sup_{\lip(\phi)\leq 1} \int_{\reali^d}\phi(y) (\d{r^j(y)}-\d{s^j(y)})=  \lip(\eta^{i,j})W_1(r^j, s^j)\,.
\end{align*}
As we obtain the same estimate for $-(r^j-s^j)*\eta^{i,j}(x)$, we can conclude that
\begin{eqnarray*}
\norma{V^i(t, x,r\conv \eta^i)-V^i(t, x, s\conv \eta^i)}_{\L\infty}
&\leq & \lip_r (V^i) \norma{ (r-s) \conv \eta^i}_{\L\infty}\\
&\leq & \lip_r (V^i)\, \lip( \eta^i)\, \mathscr{W}_1(r,s)\,.
\end{eqnarray*}
\end{proof}

Let $r, s\in \L\infty([0,T], \mathcal{P}(\reali^d)^k)$.
We want to compare the following equations, in which the nonlocal has been replaced by fixed applications, so that the system is made of decoupled equations. 
\begin{equation}\label{eq:system2}
\begin{array}{l}
\textrm{for all } i \in \{1,\ldots, k\}\qquad \quad \pt_t \rho^i +\div \left(\rho^i V^i(t,  x, \eta^i \conv r)\right)=0\,,\qquad \quad\rho^i(0,\cdot)=\bar\rho^i \,,\\[5pt]
\textrm{for all } i \in \{1,\ldots, k\}\qquad  \quad \pt_t \sigma^i +\div \left(\sigma^i U^i(t,  x, \nu^i \conv s)\right)=0\,,\qquad \quad \sigma^i(0,\cdot)=\bar\sigma^i \,.
\end{array}
\end{equation}

\begin{proposition}\label{prop:stabGeneral} Assume $V, U$ satisfy \textbf{(V)} and $\eta, \nu $ satisfy \textbf{($\boldsymbol{\eta}$)}.
Let $\bar \rho$, $\bar \sigma$ be two probability measures such that for any $i$, $\norma{\bar\rho^i}_{\Mes}=\norma{\bar \sigma^i}_{\Mes}$. 
Let $r, s\in \L\infty([0,T],\mathcal{P}(\reali^d)^k)$. 
If $\rho$ and $\sigma$ are Lagrangian solutions of (\ref{eq:system2}) associated to the initial conditions $\bar \rho$ and $\bar \sigma$,  then we have the estimate:
\begin{eqnarray}
 \mathscr{W}_1 (\rho_t,\sigma_t) &\leq & e^{Ct} \mathscr{W}_1(\bar\rho, \bar\sigma) + Cte^{Ct}\left[  \sup_{t\in [0,T]} \mathscr{W}_1 (r_t,s_t) + \norma{\eta-\nu}_{\L\infty}+ \norma{V-U}_{\L\infty} \right]\label{eq:estimate}
\end{eqnarray}
where $C$ is a constant depending on $\lip_x(V)$, $\lip_r(V)$, $\lip_x(\eta)$ and $\norma{\bar\rho}_{\Mes}$.

Furthermore, in the special case $r=\rho$ and $s=\sigma$, we get:
\begin{eqnarray}
 \mathscr{W}_1(\rho_t, \sigma_t)&\leq& e^{Kt}\mathscr{W}_1(\bar\rho, \bar \sigma)+K te^{Kt}  \left[\norma{\eta-\nu}_{\L\infty} + \norma{V-U}_{\L\infty}\right]\,,\label{eq:stab}
\end{eqnarray}
where $K$is a constant depending on $\lip_x(V)$, $\lip_r(V)$, $\lip_x(\eta)$ and $\norma{\bar\rho}_{\Mes}$. 
\end{proposition}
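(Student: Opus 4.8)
The plan is to compare the two Lagrangian flows $X^i$ and $Y^i$ associated to the decoupled equations in (\ref{eq:system2}), and to estimate the Wasserstein distance $\mathscr{W}_1(\rho_t,\sigma_t)$ by transporting mass along the coupling built from these flows. First I would fix $i$ and an optimal plan $\gamma_0^i\in\Xi(\bar\rho^i,\bar\sigma^i)$ realizing $W_1(\bar\rho^i,\bar\sigma^i)$ (Remark \ref{rk:existmin}); since $\rho^i_t={X^i_t}_\sharp\bar\rho^i$ and $\sigma^i_t={Y^i_t}_\sharp\bar\sigma^i$, the pushforward $(X^i_t\circ\mathbb{P}_x,\,Y^i_t\circ\mathbb{P}_y)_\sharp\gamma_0^i$ is an admissible plan between $\rho^i_t$ and $\sigma^i_t$ (the argument in Remark \ref{rk:inf}). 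Hence
\[
W_1(\rho^i_t,\sigma^i_t)\leq\int_{\reali^d\times\reali^d}\bigl|X^i(t,x)-Y^i(t,y)\bigr|\,\d{\gamma_0^i(x,y)}\,.
\]
So it suffices to control $|X^i(t,x)-Y^i(t,y)|$ pointwise, and then sum over $i$.

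Next I would differentiate $t\mapsto X^i(t,x)-Y^i(t,y)$ in time and estimate. Writing $b^i(t,z)=V^i(t,z,\eta^i_t\conv r_t)$ and $c^i(t,z)=U^i(t,z,\nu^i_t\conv s_t)$, we have
\[
\frac{\d{}}{\d{t}}\bigl(X^i(t,x)-Y^i(t,y)\bigr)=b^i(t,X^i(t,x))-c^i(t,Y^i(t,y))\,,
\]
and I would split this difference into three pieces: $b^i(t,X^i(t,x))-b^i(t,Y^i(t,y))$, bounded by $\lip_x(b^i)\,|X^i(t,x)-Y^i(t,y)|\leq\bigl(\lip_x(V)+\lip_r(V)\lip_x(\eta)\norma{\bar\rho}_{\Mes}\bigr)|X^i(t,x)-Y^i(t,y)|$ using Remark \ref{rk:blip} (and the mass bound $\norma{r_t}_{\Mes}=\norma{\bar\rho}_{\Mes}$); then $V^i(t,Y^i(t,y),\eta^i_t\conv r_t)-V^i(t,Y^i(t,y),\nu^i_t\conv s_t)$, which by Lipschitzness of $V^i$ in $r$ is bounded by $\lip_r(V)\bigl(\lip_x(\eta)\,\mathscr{W}_1(r_t,s_t)+\norma{\eta-\nu}_{\L\infty}\bigr)$ — the first term being exactly Lemma \ref{lem:stability}, the second a routine estimate of $\norma{(\eta^i-\nu^i)\conv s_t}_{\L\infty}\leq\norma{\eta-\nu}_{\L\infty}$; and finally $V^i(t,Y^i(t,y),\nu^i_t\conv s_t)-U^i(t,Y^i(t,y),\nu^i_t\conv s_t)$, bounded by $\norma{V-U}_{\L\infty}$. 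Setting $g^i(t)=\int|X^i(t,\cdot)-Y^i(t,\cdot)|\,\d{\gamma_0^i}$ and integrating in $t$ then in $\gamma_0^i$, one gets a Gronwall inequality
\[
g^i(t)\leq g^i(0)+\int_0^t C g^i(\tau)\,\d{\tau}+Ct\Bigl[\sup_{[0,T]}\mathscr{W}_1(r_\tau,s_\tau)+\norma{\eta-\nu}_{\L\infty}+\norma{V-U}_{\L\infty}\Bigr]
\]
with $g^i(0)=W_1(\bar\rho^i,\bar\sigma^i)$. Gronwall's lemma and summation over $i$ (absorbing the factor $k$ into $C$, consistently with the remark on norms in Section \ref{sec:solution}) yield (\ref{eq:estimate}).

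For the special case $r=\rho$, $s=\sigma$, I would apply (\ref{eq:estimate}) with $r_t=\rho_t$, $s_t=\sigma_t$: the term $\sup_{[0,T]}\mathscr{W}_1(\rho_\tau,\sigma_\tau)$ now appears on the right-hand side. Rather than absorbing it naively (which fails on long time intervals), I would run the Gronwall estimate directly with $r=\rho$, $s=\sigma$ kept inside, so that the contribution of $\mathscr{W}_1(\rho_\tau,\sigma_\tau)$ merges with the $\lip_x(b^i)$ term: the coefficient in the differential inequality becomes $K=\lip_x(V)+2\lip_r(V)\lip_x(\eta)\norma{\bar\rho}_{\Mes}$ (the two copies of $\lip_r(V)\lip_x(\eta)\norma{\bar\rho}_{\Mes}$ in the definition of $K$ in Remark \ref{rk:propsol}), and Gronwall then gives (\ref{eq:stab}) with no sup term surviving. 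The main obstacle is bookkeeping: making sure the flows $X^i,Y^i$ are well-defined and Lipschitz (Remark \ref{rk:blip}, using that $r,s$ have uniformly bounded mass), that the coupling via $\gamma_0^i$ is legitimate for pushforwards under the non-injective maps $X^i_t,Y^i_t$, and that one tracks the constants carefully enough to see the $2\lip_r(V)\lip_x(\eta)\norma{\bar\rho}_{\Mes}$ appear in the $r=\rho$ case; the analytic content is just Gronwall plus Lemma \ref{lem:stability}.
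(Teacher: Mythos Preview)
Your proposal is correct and follows essentially the same route as the paper: choose optimal plans $\gamma_0^i$, push them forward by the two flows, bound $\mathscr{W}_1(\rho_t,\sigma_t)$ by $\sum_i\int|X^i_t(x)-Y^i_t(y)|\,\d{\gamma_0^i}$, split the velocity difference using Lemma~\ref{lem:stability} and the Lipschitz bounds of \textbf{(V)} and \textbf{($\boldsymbol\eta$)}, and apply Gronwall; in the case $r=\rho$, $s=\sigma$ both you and the paper re-absorb $\mathscr{W}_1(\rho_t,\sigma_t)$ into the Gronwall term rather than into the forcing. The only differences are cosmetic: the paper works with the summed quantity $Q(t)=\sum_i\int|X^i_t-Y^i_t|\,\d{\gamma_0^i}$ rather than each $g^i$ separately, uses a five-term telescoping where you use three (your first piece bundles the paper's first two via $\lip_x(b^i)$), and inserts a truncation $Q_R$ to verify that $Q$ is finite and Lipschitz before differentiating---a point you skip but which is harmless here since $|X^i_t(x)-Y^i_t(y)|\leq|x-y|+t(\norma{V}_{\L\infty}+\norma{U}_{\L\infty})$ guarantees finiteness once $\mathscr{W}_1(\bar\rho,\bar\sigma)<\infty$.
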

Note that the estimate above comprises the case $\mathscr{W}_1(\bar\rho, \bar \sigma)=\infty$. 


\begin{proof}
Let $\rho$, $\sigma$ be two Lagrangian solutions to  the Cauchy problem for (\ref{eq:system}) with initial conditions $\bar \rho$ and $\bar \sigma$ respectively. Let $X$, $Y$ be the associated ODE flows. For any $t\in [0,T]$,  we define the map $X^i_t \Join Y^i_t:\reali^d\times\reali^d\to \reali^d\times\reali^d$ by 
$$
X^i_t \Join Y^i_t(x,y)=(X^i_t(x), Y^i_t(y))\,,\qquad \qquad\textrm{ for any } (x,y)\in \reali^d\times\reali^d\,.
$$
Let $\gamma_0^i\in \Xi\, (\bar\rho^i, \bar\sigma^i)$ so that   ${\mathbb{P}_x}_\sharp \gamma^i_0=\bar\rho^i$ and ${\mathbb{P}_y}_\sharp \gamma^i_0=\bar\sigma^i$.  Let us define the probability measure $\gamma^i_t=(X^i_t\Join Y^i_t)_\sharp\gamma^i_0$. Then, ${\mathbb{P}_x}_\sharp\gamma^i_t = \rho^i_t$ and ${\mathbb{P}_y}_\sharp\gamma^i_t =\sigma^i_t$ so that $\gamma^i_t\in \Xi\, (\rho^i_t, \sigma^i_t)$.

Let $R>0$, we define, for $t\geq 0$  
\begin{align*}
Q_R(t)&=\sum_{i=1}^k\int_{X_t^i(B_R)\times Y_t^i(B_R)} \modulo{ x-y } \d{\gamma^i_t(x,y) } =\sum_{i=1}^k\int_{B_R\times B_R} \modulo{ X^i_t(x)-Y^i_t(y) } \d{\gamma^i_0 (x,y)} \,.
\end{align*}

Note first that $Q_R$ is Lipschitz. Indeed, let $t, s\geq 0$, then we have
\begin{eqnarray}
\modulo{Q_R(t)-Q_R(s)}
&\leq & \modulo{\sum_{i=1}^k \int_{B_R\times B_R}\left(\modulo{X^i_t(x)-Y^i_t(y) } -\modulo{X^i_s(x)-Y^i_s(y)} \right)\d{ \gamma^i_0 (x,y)} } \nonumber\\
&\leq & \sum_{i=1}^k \int_{B_R\times B_R}\modulo{X^i_t(x)-Y^i_t(y)- X^i_s(x)+ Y^i_s(y)} \d{\gamma^i_0 (x,y)}\nonumber\\
&\leq & \sum_{i=1}^k \int_{B_R\times B_R}\left(\modulo{X^i_t(x)-X^i_s(x)} +\modulo{Y^i_t(y)-Y^i_s(y)} \right) \d{\gamma^i_0 (x,y)}\nonumber \\
&\leq & \sum_{i=1}^k \int_{B_R\times B_R} \left(\norma{V^i}_{\L\infty} +\norma{U^i}_{\L\infty}\right) \modulo{t-s} \d{\gamma^i_0 (x,y)}\nonumber\\
&\leq & \left(\norma{V}_{\L\infty}+\norma{U}_{\L\infty}\right) \norma{\gamma_0}_{\Mes} \modulo{t-s} \,. \label{eq:Qlip}
\end{eqnarray}

Let us assume that $\mathscr{W}_1(\bar \rho, \bar \sigma)<\infty$, otherwise the thesis is trivial. Then, by Remark \ref{rk:existmin}, for all $i\in \{1,..., k\}$,  we can find a bounded positive measure $\gamma_0^i\in \Xi\, (\bar\rho^i, \bar\sigma^i)$ so that
$$
W_1(\bar \rho^i, \bar \sigma^i)= \int_{\reali^d \times \reali^d}\modulo{x-y}\d{\gamma_0^i(x,y)} \,.
$$
Consequently we have, for any $R\geq 0$, $Q_R(0)\leq \mathscr{W}_1(\bar \rho, \bar \sigma)$.
Hence, using (\ref{eq:Qlip}), for any $t\geq 0$, we have
\begin{eqnarray*}
Q_R(t)&\leq &Q_R(0)+ ( \norma{U}_{\L\infty} +\norma{V}_{\L\infty})\norma{\gamma_0}_{\Mes} t\\
&\leq & \mathscr{W}_1(\bar \rho, \bar \sigma) + (\norma{U}_{\L\infty}
+\norma{V}_{\L\infty}) \norma{\gamma_0}_{\Mes} t\,.
\end{eqnarray*}
Thus, for any $t\geq 0$,  $Q_R(t)$ remains finite when $R\to \infty$ and since $R\mapsto Q_R(t)$ is increasing with respect to $R$,  we can define $Q(t)=\lim_{R\to \infty} Q_R(t)$.

Let us now consider $Q$. 
The same computation as in (\ref{eq:Qlip}) ensures that $Q$ is Lipschitz so we can differentiate for almost every $t$ and obtain
\begin{eqnarray*}
Q'(t)
&\leq & \sum_{i=1}^k\int_{\reali^d\times \reali^d} \modulo{V^i\left(t,X^i_t(x), r_t\conv \eta_t^i(X^i_t(x))\right)-U^i\left(t,Y^i_t(y), s_t\conv \nu_t^i(Y^i_t(y))\right)}\d{\gamma^i_0 (x,y)}\\
&\leq & \sum_{i=1}^k\int_{\reali^d\times \reali^d} \bigg(  \modulo{V^i\left( t,X^i_t(x), r_t\conv \eta_t^i(X^i_t(x))\right)-V^i\left(t,Y^i_t(y), r_t\conv \eta_t^i(X^i_t(x))\right)} \\
&&\qquad+ \modulo{V^i\left(t,Y^i_t(y), r_t\conv \eta_t^i(X^i_t(x))\right)-V^i\left(t,Y^i_t(y), r_t\conv \eta_t^i(Y^i_t(y)) \right)}\\[2pt]
&&\qquad+ \modulo{V^i\left(t,Y^i_t(y), r_t\conv \eta_t^i(Y^i_t(y))\right)-V^i\left(t,Y^i_t(y), s_t \conv \eta_t^i(Y^i_t(y))\right)}\\[2pt]
&&\qquad+ \modulo{V^i\left(t,Y^i_t(y), s_t\conv \eta_t^i(Y^i_t(y))\right)-V^i\left(t,Y^i_t(y), s_t \conv \nu_t^i(Y^i_t(y))\right)}\\
&&\qquad+ \modulo{V^i\left(t,Y^i_t(y), s_t\conv \nu_t^i(Y^i_t(y))\right)-U^i\left(t,Y^i_t(y), s_t \conv \nu_t^i(Y^i_t(y))\right)}
\bigg)\d{\gamma^i_0 (x,y)}\,.
\end{eqnarray*}
Note that
\begin{eqnarray*}
\modulo{V^i(t,y,r_t*\eta_t^i(x))-V^i(t,y,r_t*\eta_t^i(y))}&\leq & \lip_r(V^i) \lip(r_t\conv \eta_t^i)\modulo{x-y}\\
&\leq & \lip_r(V^i) \norma{r_t}_{\Mes}\lip(\eta^i) \modulo{x-y}\,.
\end{eqnarray*}
Using Lemma \ref{lem:stability} we obtain
\begin{eqnarray}
Q'(t)&\leq & \sum_{i=1}^k\int_{\reali^d\times \reali^d} \left(\lip_x(V^i)+\lip_r(V^i)\norma{r_t}_\Mes \lip(\eta^i)\right)\modulo{X^i_t(x)-Y^i_t(y)}\d{\gamma^i_0 (x,y)}\nonumber\\
&&+  \sum_{i=1}^k\int_{\reali^d\times \reali^d}\lip_r( V^i)\, \lip( \eta^i)\, \mathscr{W}_1(r_t, s_t) \d{\gamma^i_0 (x,y)}\nonumber \\
&&+ \sum_{i=1}^k\int_{\reali^d\times \reali^d} \lip_r( V^i) \modulo{s_t\conv (\eta_t^i-\nu_t^i)}\d{\gamma^i_0 (x,y)}+ \sum_{i=1}^k\int_{\reali^d\times \reali^d}  \norma{V^i-U^i}_{\L\infty}\d{\gamma^i_0 (x,y)}\nonumber \\
&\leq &  C\left[ Q(t)+ \mathscr{W}_1(r_t, s_t) +\norma{ \eta-\nu}_{\L\infty}+ \norma{U-V}_{\L\infty}\right]\,.\label{eq:stima}
\end{eqnarray}

Taking the $\sup$ in time of $\mathcal{W}_1(r_t, s_t)$ on the right-hand side and 
applying Gronwall Lemma, we get
\begin{eqnarray*}
Q(t)&\leq &e^{Ct}Q(0)+ C te^{Ct}\left(\sup_{\tau\in [0,t]}\mathscr{W}_1(r_\tau,s_\tau)+ \norma{ \eta-\nu}_{\L\infty}+ \norma{U-V}_{\L\infty}\right)\,.
\end{eqnarray*}
Note now that, thanks to remark (\ref{rk:inf}),  for any $t\geq 0$
\begin{eqnarray}
\mathscr{W}_1(\rho_t, \sigma_t)&\leq &Q(t)\,.\label{eq:WQ}
\end{eqnarray} 
Furthermore, we have chosen $\gamma_0$ in an optimal way thanks to Remark \ref{rk:existmin} so that $Q(0)=\mathscr{W}_1(\bar\rho, \bar \sigma)$. Hence we obtain, for any $t\in [0,T]$:
\begin{eqnarray*}
 \mathscr{W}_1(\rho_t, \sigma_t)&\leq &e^{Ct}\mathscr{W}_1(\bar\rho, \bar \sigma)+Ct e^{Ct} \left(\sup_{\tau\in [0,t]}\mathscr{W}_1(r_\tau,s_\tau)+ \norma{ \eta-\nu}_{\L\infty}+ \norma{U-V}_{\L\infty}\right)\,,
\end{eqnarray*}
which is the expected result (\ref{eq:estimate}).

In the particular case $r=\rho$ and $s=\sigma$, applying (\ref{eq:WQ}) to (\ref{eq:stima}) we obtain
\[
 Q'(t)\leq 2C Q(t)+C\left( \norma{ \eta-\nu}_{\L\infty}+ \norma{U-V}_{\L\infty}\right)\,.
\]
Applying Gronwall Lemma, we finally obtain $Q(t)\leq e^{2C t}Q(0)+Cte^{2Ct}\left(\norma{ \eta-\nu}_{\L\infty}+ \norma{U-V}_{\L\infty}\right)$, which is (\ref{eq:stab}).

\end{proof}


\section{Proof of the main theorem}\label{sec:proofmaintheorem}
The proof of Theorem \ref{thm:main} is based on the following  idea: let us fix the nonlocal term and, instead of (\ref{eq:system}), we study the Cauchy problem
\begin{equation}\label{eq:fix}
\pt_t \rho+\div (\rho\, V(t,x,r*\eta))=0\,, \qquad  \rho(0)=\bar \rho\,,
\end{equation}
where $r$ is a  given application. We consider here probability measures. In the more general case of positive measures with the same total mass, by rescaling we are back to the case of probability measures.

 Let us  introduce  the application
\begin{equation}\label{eq:Q}
\mathscr{Q}\;:\;\left\{
\begin{array}{ccc}
r & \mapsto & \rho\\
\mathscr{X}&\to &\mathscr{X}
\end{array}
\right\}\,,
\end{equation} 
where we consider the space $\mathscr{X}=\L\infty([0,T], \mathcal{P}(\reali^d)^k)$ for $T$ chosen in such a way that: 
\begin{description}
\item[(a)] The space $\mathscr{X}$ is equipped with a distance $d$ that makes $\mathscr{X}$ complete: for $\mu, \nu\in \mathscr{X}$, we define 
$$
d(\mu, \nu)=\sup_{t\in [0,T]} \mathscr{W}_1(\mu_t, \nu_t)\,.
$$
\item[(b)] The application $\mathscr{Q}$ is well-defined: the Lagrangian solution $\rho\in \mathscr{X}$ to (\ref{eq:fix}) exists and is unique (for  a fixed $r$). Indeed, let $X_t$ be the ODE flow associated to $ V(t, x, r_t\conv\eta_t)$, then we can define $\rho_t={X_t}_\sharp \bar \rho$. Since $\bar\rho$ is a positive measure, then so is $\rho_t$. 
\item[(c)] The application $\mathscr{Q}$ is a contraction: this is given by Proposition \ref{prop:stabGeneral}. 
Indeed, 
let $r, s$  in $ \L\infty([0,T], \Mes^+(\reali^d)^k)$ and denote $\rho=\mathscr{Q}(r)$, $\sigma=\mathscr{Q}(s)$ the associated solutions to (\ref{eq:fix}). Note that $\rho$ and $\sigma$ have the same initial condition. Thanks to Proposition \ref{prop:stabGeneral}, we obtain the contraction estimates
\[
 \sup_{[0,T]}\mathscr{W}_1(\rho_t, \sigma_t)\leq C T e^{CT} \sup_{[0,T]}\mathscr{W}_1(r_t, s_t)\,,
\]
where $C$ depends only on $\lip_x(V)$, $\lip_r(V)$, $\lip_x(\eta)$ and $\norma{\bar\rho}_{\Mes}$. 
\end{description}
Hence,  for $T$ small enough, by the Banach  fixed point Theorem we obtain existence and uniqueness in $\mathscr{X}$ of a Lagrangian solution to (\ref{eq:system}) for $t\in [0,T]$.
As $\norma{\rho_T}_{\Mes}=\norma{\bar\rho}_{\Mes}$ the coefficient $C$ does not depend on time and  we can iterate the procedure. Thus we have existence and uniqueness on $[0,+\infty[$.

Observe that uniqueness can be also obtained directly  by the stability estimate (\ref{eq:stab}) in the particular case   $V^i=U^i$, $\eta^i=\nu^i$, $\bar\rho=\bar\sigma$.


%
%

\small{

  \bibliography{nonlocal} 
}

\end{document}